\newcommand{\abs}[1]{\mathopen\lvert#1\mathclose\rvert}
\newcommand{\bigabs}[1]{\bigl\lvert#1\bigr\rvert}
\newcommand{\norm}[1]{\mathopen\lVert#1\mathclose\rVert}
\newcommand{\floor}[1]{\lfloor#1\rfloor}
\newcommand{\Nset}{{\mathbb N}}
\newcommand{\Rset}{{\mathbb R}}
\newcommand{\Sp}{{\mathbb S}}
\newcommand{\st}{\;:\;}
\newcommand{\dif}{\,\mathrm{d}}
\theoremstyle{plain}
\newtheorem{proposition}{Proposition}[section]
\newtheorem{lemma}[proposition]{Lemma}
\newtheorem{theorem}{Theorem}
\newtheoremstyle{addendumstyle}{\topsep}{\topsep}{\itshape}{}{\bfseries}{.}{.5em plus 1pt minus 1pt}{#1 #2 to #3}
\theoremstyle{addendumstyle}
\theoremstyle{definition}
\newtheorem{definition}{Definition}[section]
\theoremstyle{remark}
\newtheorem{remark}{Remark}[section]
\newtheoremstyle{claimstyle}{\topsep}{\topsep}{}{}{\bfseries}{.}{.5em plus 1pt minus 1pt}{#1}
\theoremstyle{claimstyle}
\numberwithin{equation}{section}
\date{\today}
\title[Weak approximation by bounded Sobolev maps]{Weak approximation by bounded Sobolev maps with values into complete manifolds}
\author[P. Bousquet]{Pierre Bousquet}
\address{
Pierre Bousquet\hfill\break\indent
Universit{\'e} de Toulouse \hfill\break\indent
Institut de Math\'ematiques de Toulouse, UMR CNRS 5219\hfill\break\indent
Universit\'e Paul Sabatier Toulouse 3\hfill\break\indent 
118 Route de Narbonne\hfill\break\indent
31062 Toulouse Cedex 9\hfill\break\indent
France}
\author[A. C. Ponce]{Augusto C. Ponce}
\address{
Augusto C. Ponce\hfill\break\indent
Universit{\'e} catholique de Louvain\hfill\break\indent
Institut de Recherche en Math{\'e}matique et Physique\hfill\break\indent
Chemin du cyclotron 2, bte L7.01.02\hfill\break\indent
1348 Louvain-la-Neuve\hfill\break\indent
Belgium}
\author[J. Van Schaftingen]{Jean Van Schaftingen}
\address{
Jean Van Schaftingen\hfill\break\indent
Universit{\'e} catholique de Louvain\hfill\break\indent
Institut de Recherche en Math{\'e}matique et Physique\hfill\break\indent
Chemin du cyclotron 2, bte L7.01.02\hfill\break\indent
1348 Louvain-la-Neuve\hfill\break\indent
Belgium}
\begin{document}

\begin{abstract}
We have recently introduced the trimming property for a complete Riemannian manifold $N^{n}$ as a necessary and sufficient condition for bounded maps to be strongly dense in $W^{1, p}(B^m; N^{n})$ when $p \in \{1, \dotsc, m\}$.
We prove in this note that even under a weaker notion of approximation, namely the weak sequential convergence, the trimming property remains necessary for the approximation in terms of bounded maps. 
The argument involves the construction of a Sobolev map having infinitely many analytical singularities going to infinity.
\end{abstract}

\subjclass[2010]{46E35, 46T20}

\keywords{Weak sequential density; Sobolev maps; bounded maps; trimming property; complete manifolds}

\maketitle{}

\section{Introduction and statement of the result}
Given a Riemannian manifold \(N^n\) embedded in the Euclidean space \(\Rset^{\nu}\), the space \(W^{1, p} (B^m; N^n)\) of Sobolev maps from the unit ball \(B^m \subset \Rset^{m}\) to \(N^n\) can be defined by 
\begin{multline*}
 W^{1, p} (B^m; N^n) 
 = \bigl\{ u \in W^{1, p} (B^m; \Rset^\nu) \st \\
  u \in N^n \text{ almost everywhere in \(B^m\)} \bigr\}.
\end{multline*}
This space arises in some geometrical settings (harmonic maps) 
and physical models (liquid crystals, gauge theories, elasticity).

One question concerning these spaces is whether and how Sobolev maps can be approximated by smooth maps. 
Due to the nonconvex character of the target manifold \(N^n\), the usual convolution 
by a family of mollifiers fails in general. 
However, when the target manifold \(N^n\) is compact and \(p \ge m\), the class \(C^\infty(\overline{B^m}; N^n)\) is strongly dense in \(W^{1, p} (B^m; N^n)\): for every map \(u \in W^{1, p} (B^m; N^n)\), there exists a sequence of smooth maps \((u_\ell)_{\ell \in \Nset}\) in \(C^\infty (\overline{B^m}; N^n)\) that converges in measure to \(u\) on \(B^m\) and
\[
  \lim_{\ell \to \infty} \int_{B^{m}} \abs{D u_\ell - Du}^p = 0;
\]
see \cite{SchoenUhlenbeck}. 
When \(1 \le p < m\), this results holds if and only if the homotopy group of \(N^{n}\) of order \(\floor{p}\) (integer part of \(p\)) is trivial; see \cite{Bethuel1991,HangLin2003}.

In a recent work \cite{BousquetPonceVanSchaftingen}, we have considered the question of what happens when the target manifold \(N^n\) is not compact, but merely complete.
The starting observation is that the same homotopy assumption on \(N^{n}\) is necessary and sufficient for every map in \((W^{1, p} \cap L^\infty)(B^m; N^n)\) to be strongly approximated by smooth maps. 
Choosing a \emph{closed} embedding of \(N^n\) into \(\Rset^\nu\), the boundedness of a measurable map \(u : B^m \to N^n\) with respect to the Riemannian distance coincides with its boundedness as a map into \(\Rset^\nu\); by \(L^\infty (B^m; N^n)\) we mean the class of such maps.
Therefore the problem of strong approximation by smooth maps is reduced to the approximation of Sobolev maps by bounded Sobolev maps when \(p \le m\).

In \cite{BousquetPonceVanSchaftingen}, we prove the following

\begin{theorem}
  If\/ \(1 \le p \le m\) is not an integer, then \((W^{1, p} \cap L^\infty)(B^m; N^n)\) is strongly dense in \(W^{1, p}(B^{m}; N^{n})\). 
\end{theorem}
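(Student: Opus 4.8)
Write $k = \floor{p}$; since $p \in [1, m]$ is not an integer, $k \in \{1, \dotsc, m-1\}$ and
\[
 k < p < k+1 \le m .
\]
The plan is to approximate a given $u \in W^{1, p}(B^m; N^n)$ by maps obtained from $u$ by retaining a suitably averaged, continuous version of its restriction to the $k$-dimensional skeleton of a fine cubeulation of $B^m$, and then filling in the cells of dimension greater than $k$ by iterated $0$-homogeneous extension. The non-integrality of $p$ is used in two complementary ways: $p > k$ forces the restriction of $u$ to any $k$-dimensional face to be continuous, hence bounded, so that the skeleton data take values in a \emph{bounded} set, which is compact since $N^n$ is complete and closedly embedded in $\Rset^\nu$; while $p < k+1$ makes the homogeneous extension through each successive dimension an operation of finite $W^{1, p}$ energy.

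First I would fix a small scale $\varepsilon > 0$ and, by a Fubini-type averaging over the translations of the grid $\varepsilon \Z^m$, select a cubeulation $\mathcal{T}_\varepsilon$ of $B^m$ into cubes of size comparable to $\varepsilon$ such that the restriction of $u$ to every $j$-dimensional cell lies in $W^{1, p}$ with
\[
 \sum_{\sigma \in \mathcal{T}_\varepsilon^j} \varepsilon^{m - j} \int_\sigma \bigl( \abs{u}^p + \abs{Du}^p \bigr) \lesssim \int_{B^m} \bigl( \abs{u}^p + \abs{Du}^p \bigr) .
\]
After an \emph{opening} of $u$ around the $k$-skeleton $\mathcal{T}_\varepsilon^k$, that is, replacing $u$ near $\mathcal{T}_\varepsilon^k$ by a map that is locally constant in the directions transverse to $\mathcal{T}_\varepsilon^k$ at the cost of an error that is $W^{1, p}$-small for a generic choice of the opening parameters, the resulting map $\tilde u \in W^{1, p}(B^m; N^n)$ restricts to a \emph{continuous} map on $\mathcal{T}_\varepsilon^k$, by the Morrey embedding $W^{1, p} \hookrightarrow C^0$ on $k$-dimensional domains, valid since $p > k$. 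As $\mathcal{T}_\varepsilon^k$ is a finite union of such faces, $\tilde u(\mathcal{T}_\varepsilon^k)$ is contained in a compact set $K_\varepsilon \subset N^n$.

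Next, on the complement of a neighborhood of $\mathcal{T}_\varepsilon^k$, I would rebuild the map cell by cell through the dimensions $k+1, k+2, \dotsc, m$ of a dual decomposition, at each step replacing the map inside a cell by the $0$-homogeneous extension of its boundary values. Each such extension of a $W^{1, p}$ map from the boundary sphere of a $(j+1)$-cell to its interior is again $W^{1, p}$, with energy controlled by that of the boundary datum, precisely because $\int_{B^{j+1}} \abs{x}^{-p} \dif x < \infty$ whenever $p < j+1$, which holds for every $j$ with $k \le j \le m-1$; moreover a homogeneous extension takes values in the range of its boundary datum, so the final map $u_\varepsilon$ is still $N^n$-valued and, crucially, bounded with values in $K_\varepsilon$. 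The opening performed earlier guarantees that these boundary data agree across the interfaces of adjacent cells, so that $u_\varepsilon \in (W^{1, p} \cap L^\infty)(B^m; N^n)$. Collecting the estimates, the $W^{1, p}$-distance from $u_\varepsilon$ to $u$ is bounded by the energy of $u$ on a shrinking neighborhood of $\mathcal{T}_\varepsilon^k$ together with the opening error, both of which vanish as $\varepsilon \to 0$.

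The main difficulty is precisely the tension between \emph{bounded} and \emph{manifold-valued}: one cannot truncate $u$ in $\Rset^\nu$ without leaving $N^n$, so the construction must be built around an operation, namely the homogeneous extension, that by design never introduces values outside the bounded set already present on the lower skeleton. Turning this into a proof is a matter of careful bookkeeping for the opening step, choosing generic translation and radius parameters so that the restriction to $\mathcal{T}_\varepsilon^k$ is simultaneously continuous and $W^{1, p}$-close to $u$ and so that the successive homogeneous extensions glue into a single Sobolev map, together with the scale-counting that converts the per-cell estimates into a global bound that degenerates, as $\varepsilon \to 0$, only on the asymptotically negligible set near $\mathcal{T}_\varepsilon^k$.
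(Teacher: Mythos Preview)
The paper does not actually give a proof of this theorem; it is quoted as a result from the authors' earlier work~\cite{BousquetPonceVanSchaftingen}, so there is no in-paper proof to compare against. Your sketch is correct and is essentially the strategy carried out in that reference: opening around the \(\lfloor p\rfloor\)-skeleton so that the restriction becomes continuous by Morrey's embedding (this uses \(p>\lfloor p\rfloor\)), and then rebuilding the map on the higher-dimensional dual cells by iterated \(0\)-homogeneous extension (this uses \(p<\lfloor p\rfloor+1\)), which preserves the range and hence produces a bounded \(N^n\)-valued approximant.
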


When \(p \le m\) is an integer, a new obstruction, this time of analytical nature, arises. 
Indeed, even in the case \(p = m\), where Sobolev maps cease to be continuous but still have 
vanishing mean oscillation (VMO), there exist maps \(u \in W^{1, p}(B^p; N^n)\) for some complete manifolds \(N^n\) that cannot be strongly approximated by bounded maps~\cite{HajlaszSchikorra2014,BousquetPonceVanSchaftingen}. 
A characteristic of those pathological target manifolds \(N^n\) is that their geometry degenerates at infinity, and the examples available can be realized as an \(m\)-dimensional infinite bottle in \(\Rset^{m+1}\) with a thin neck.

In order to identify the mechanism that is hidden in these examples, we have introduced the \emph{trimming property}:

\begin{definition}
\label{definitionExtensionProperty}
Given \(p \in \Nset \setminus \{0\}\), the manifold \(N^n\) satisfies the \emph{trimming property of dimension \(p\)} whenever there exists a constant \(C > 0\) such that every map \(f \in C^\infty(\partial B^p; N^n)\) that has a Sobolev extension \(u \in W^{1, p}(B^p; N^n)\) also has a smooth extension \(v \in C^\infty(\overline{B^p}; N^n)\) such that
\[
  \int_{B^p} \abs{Dv}^p \leq C \int_{B^p} \abs{Du}^p.
\]
\end{definition}

The use of \(C^{\infty}\) maps is not essential, and other classes like Lipschitz maps or continuous Sobolev maps (\(W^{1, p} \cap C^{0}\)) yield equivalent definitions of the trimming property;
see e.g.~Proposition~6.1 in \cite{BousquetPonceVanSchaftingen}.
The condition above allows one to characterize the target manifolds \(N^n\) for which
every map has a strong approximation by bounded Sobolev maps:

\begin{theorem}
\label{theoremIntegerTrimming}
For every \(p \in \{1, \ldots, m\}\), the set \((W^{1, p} \cap L^{\infty})(B^m; N^n)\) is strongly dense in \(W^{1, p}(B^m; N^n)\) if and only if \(N^n\) satisfies the trimming property of dimension \(p\).
\end{theorem}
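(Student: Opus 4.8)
The statement is an equivalence, and I would treat the two implications in turn; the sufficiency of the trimming property is the substantial one, the necessity being a construction designed to defeat any would-be bounded approximation.

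\textbf{Sufficiency (trimming \(\Rightarrow\) density).} Fix \(u \in W^{1, p}(B^m; N^n)\) and \(\varepsilon > 0\). Since \(W^{1, p}(B^m) \hookrightarrow L^p(B^m)\), choosing a base point \(y_0 \in N^n\) and \(R\) large enough, the superlevel set \(A := \{x : \dist_{\Rset^\nu}(u(x), y_0) > R\}\) has \(\int_A (\abs{Du}^p + \abs{u - y_0}^p) < \varepsilon^p\). Outside \(A\) the map \(u\) is already bounded, so the plan is to rebuild \(u\) only on a small-energy neighbourhood of \(A\), without changing its trace there, into a bounded map. I would put a cubic grid \(\cK_\eta\) of mesh \(\eta\) on \(B^m\) --- generic under a Fubini-type averaging over its position and dilation, so that \(u\) restricts to a \(W^{1, p}\) map on every skeleton \(\cK_\eta^j\) with the usual scale-weighted bounds (in particular \(u\) is continuous on \(\cK_\eta^{p-1}\), as \(p - 1 < p\)) --- and let \(\mathcal{B}\) be the union of the closed \(m\)-cubes meeting \(A\) together with a one-cube collar; for \(\eta\) small, \(\int_{\mathcal{B}}(\abs{Du}^p + \abs{u - y_0}^p) < 2\varepsilon^p\) while \(u\) is bounded on \(B^m \setminus \mathcal{B}\).

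On \(\mathcal{B}\) I would follow the opening machinery of \cite{BousquetPonceVanSchaftingen}: open \(u\) successively around the lower skeleta (each opening is energy-controlled and confined to a thin tube, so it perturbs \(u\) by little), after which the map, in a neighbourhood of the dual \((m-p)\)-skeleton inside \(\mathcal{B}\), restricts on \(p\)-dimensional slices to a \(W^{1, p}(B^p; N^n)\) map which is a Sobolev extension of its own trace on \(\partial B^p\) and is either \(0\)-homogeneous or trivial in the remaining \(m - p\) directions. On each such \(p\)-slice the trimming property of dimension \(p\) --- in its \(W^{1, p} \cap C^0\) form, cf.\ the remark after Definition~\ref{definitionExtensionProperty} --- furnishes a smooth, hence bounded, extension of the same trace with \(p\)-energy at most \(C\) times the original; performing these replacements compatibly along the dual skeleton and extending homogeneously in the complementary directions (admissible because one passes from dimension \(\ge p\) to dimension \(\ge p + 1\)) produces a bounded map \(v\) that coincides with \(u\) off \(\mathcal{B}\). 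A bookkeeping of scales is the point: the factor \(\eta^{-(m-p)}\) from restricting \(u\) to the \(p\)-skeleton is cancelled by the \(\eta^{m-p}\) generated by the homogeneous extensions, so \(\norm{v - u}_{W^{1, p}(B^m)} \le C' \varepsilon\) with \(C'\) depending only on \(m\), \(p\), and the trimming constant. Letting \(\varepsilon \to 0\) gives the density. No homotopy hypothesis enters, because we only ever build bounded maps and the homogeneous extension of a bounded map is bounded --- this is why the obstruction is analytical rather than topological.

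\textbf{Necessity (density \(\Rightarrow\) trimming).} Arguing by contraposition, suppose the trimming property of dimension \(p\) fails. A routine localization argument --- subdividing a candidate extension into small cubes upgrades a uniform trimming estimate valid for small-energy maps into one valid for all maps --- shows the failure persists at arbitrarily small energies: for every \(k \in \Nset\) there are \(f_k \in C^\infty(\partial B^p; N^n)\) with a Sobolev extension \(u_k \in W^{1, p}(B^p; N^n)\) such that \(e_k := \int_{B^p} \abs{Du_k}^p \le 2^{-k}\) but no smooth extension of \(f_k\) has energy \(\le k\, e_k\). I would then assemble a single \(U \in W^{1, p}(B^m; N^n)\) carrying, in a sequence of disjoint shrinking regions accumulating at a point, rescaled copies of the cylindrical extensions \((x', x'') \mapsto u_k(x')\), glued through transition layers of summable energy; building these transition layers is delicate precisely because the images of the \(f_k\) must escape to infinity in \(N^n\) (a manifold of bounded geometry satisfies the trimming property), so one cannot funnel everything through a fixed base point --- this is the ``Sobolev map with infinitely many analytical singularities going to infinity''. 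If \((W^{1, p} \cap L^\infty)(B^m; N^n)\) were strongly dense, pick bounded \(v_j \to U\) in \(W^{1, p}\); a slicing argument (strong \(W^{1, p}\)-convergence forces convergence of traces) yields, for each fixed \(k\) and \(j\) large, a bounded map \(w \in (W^{1, p} \cap L^\infty)(B^p; N^n)\) whose trace is \(C^0\)- and \(W^{1, p}(\partial B^p)\)-close to \(f_k\) (Sobolev embedding on \(\partial B^p\), as \(p - 1 < p\)) with \(\int_{B^p} \abs{Dw}^p \le e_k + o_j(1)\). Correcting the trace of \(w\) to exactly \(f_k\) in a thin collar (cheap, the data being \(C^0\)-close and \(f_k\) smooth) and then smoothing \(w\) --- possible with a bounded energy loss since its image is relatively compact in \(N^n\), \(p\) is the critical exponent on \(B^p\), and hence no topological obstruction intervenes --- gives a smooth extension of \(f_k\) of energy \(\le C\, e_k\); taking \(k > C\) contradicts the choice of \(f_k\).

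\textbf{Main obstacle.} The crux is the sufficiency direction: converting the \emph{dimension-\(p\)} trimming estimate into a modification of \(u\) on the \emph{\(m\)-dimensional} bad region that at once keeps the trace on \(\partial \mathcal{B}\), yields a bounded map, and costs only \(O(\varepsilon^p)\) in energy. This forces the whole opening-and-homogeneous-extension apparatus --- opening along the intermediate skeleta, a compatible choice of the \(p\)-dimensional slices along the dual \((m-p)\)-skeleton, homogeneous extension in the complementary directions --- together with a careful accounting of the mesh \(\eta\) so that all its powers cancel. In the necessity direction, the corresponding difficulty is the construction of the transition layers linking the singular regions with summable energy in spite of the escaping targets.
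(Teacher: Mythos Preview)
This paper does not contain a proof of Theorem~\ref{theoremIntegerTrimming}: the result is quoted from the companion paper \cite{BousquetPonceVanSchaftingen} as background for the present note, whose new content is Theorem~\ref{theoremMain}. There is therefore nothing here to compare your proposal against line by line.

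That said, two remarks on your sketch are in order. Your sufficiency outline (good grid, opening along lower skeleta, trimming on \(p\)-dimensional slices, homogeneous extension in the transverse directions, cancellation of the powers of \(\eta\)) is indeed the architecture of the argument in \cite{BousquetPonceVanSchaftingen}, and your identification of the bookkeeping of scales as the crux is accurate.

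Your necessity argument, however, is aimed at the wrong theorem. For \emph{strong} density one does not need infinitely many singularities: if trimming fails, a \emph{single} map \(u_k\) (with \(k\) large enough) already cannot be strongly approximated by bounded maps, since a strong limit would hand you, after slicing and smoothing, a smooth extension of \(f_k\) with energy close to \(e_k\), contradicting the choice of \(f_k\). The paper says this explicitly: the examples from \cite{HajlaszSchikorra2014,BousquetPonceVanSchaftingen} with finitely many singularities already fail strong approximation, and Remark~\ref{remarkSingularityFinite} shows that such maps nevertheless admit a \emph{weak} approximation. The infinite-singularity construction you describe is precisely what the present paper builds to prove Theorem~\ref{theoremMain} (weak density), not Theorem~\ref{theoremIntegerTrimming}. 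Incidentally, your assertion that ``one cannot funnel everything through a fixed base point'' is contradicted by the paper's own construction: Lemmas~\ref{lemmaPairing} and~\ref{lemmaBasePoint} do exactly this, reducing each building block to a map equal to a fixed \(y_0\) near the boundary at an energy cost that is made summable by choosing the parameters \(\sigma_j\) appropriately.
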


The trimming property can be seen to be always satisfied when \(p = 1\) by taking as \(v\) a shortest geodesic joining the points \(f(-1)\) and \(f(1)\). 
We focus therefore on the case \(p \ge 2\) and the trimming property 
fails.
In this case, one may hope to approximate every map in \(W^{1, p}(B^m; N^n)\) by bounded maps using some weaker topology.
In this note, we address the question of whether this is true for the weak sequential approximation: 
given \(u \in W^{1, p} (B^m; N^n)\), to find a sequence of maps \((u_\ell)_{\ell \in \Nset}\) in \((W^{1, p} \cap L^{\infty})(B^m; N^n)\) which converges in measure to \(u\) and satisfies
\[
 \sup_{\ell \in \Nset} \int_{B^m} \abs{D u_\ell}^p < + \infty.
\]
This notion of convergence is also known as weak-bounded convergence \cite{White1988}.

An inspection of the explicit examples from \cite{HajlaszSchikorra2014,BousquetPonceVanSchaftingen} of maps  \(u \in W^{1, p} (B^p; N^n)\) that have no strong approximation by bounded maps shows that they have nevertheless a weak sequential approximation by bounded maps; see Remark~\ref{remarkSingularityFinite} below.
Using a more subtle construction that involves infinitely many analytical singularities, we prove that the trimming property is still necessary for the weak sequential approximation.

\begin{theorem}
\label{theoremMain}
Let \(N^n\) be a connected closed embedded Riemmanian manifold in \(\Rset^\nu\) and \(p \in \{2, \dotsc, m\}\). 
Every map \(u \in W^{1, p} (B^m; N^n)\) has a sequence in \((W^{1, p} \cap L^\infty)(B^m; N^n)\) that converges weakly to \(u\) if and only if \(N^n\) satisfies the trimming property of dimension \(p\).
\end{theorem}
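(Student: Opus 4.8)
The forward implication needs no new ideas: by Theorem~\ref{theoremIntegerTrimming}, the trimming property of dimension \(p\) already provides a \emph{strong} approximation of every \(u \in W^{1, p}(B^m; N^n)\) by bounded maps, and a sequence converging to \(u\) in \(W^{1, p}\) and in measure has, a fortiori, uniformly bounded \(p\)-energy, hence converges weakly to \(u\). The content of the theorem is therefore the reverse implication, which I would prove by contraposition: assuming \(N^n\) fails the trimming property of dimension \(p\), I would construct a single map \(u \in W^{1, p}(B^m; N^n)\) that no sequence of bounded maps can approach weakly.

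The failure of the trimming property asserts precisely that, over boundary data on \(\partial B^p\) admitting a Sobolev extension, the ratio of the least \emph{bounded}-extension energy to the least Sobolev-extension energy is unbounded; here one may freely switch among the equivalent classes of extensions recalled after Definition~\ref{definitionExtensionProperty}. The plan is first to upgrade this into a \emph{stable, quantitative} statement: to produce, for each \(k \in \Nset\), a datum \(f_k \in C^\infty(\partial B^p; N^n)\) with a Sobolev extension \(w_k\) of small energy \(\epsilon_k := \int_{B^p} \abs{D w_k}^p\), together with thresholds \(\eta_k > 0\) and \(\lambda_k \geq k\), such that every bounded extension of \emph{any} boundary datum that is \(\eta_k\)-close to \(f_k\) has \(p\)-energy at least \(\lambda_k\). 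This step rests on the lower semicontinuity of the \(p\)-energy and a compactness argument on bounded extensions with bounded energy. When \(p < m\) the smallness of \(\epsilon_k\) is automatic, because a copy of \(w_k\) realized inside a ball \(B(a_k, \rho_k) \subset B^m\) contributes energy that scales like \(\rho_k^{m - p} \to 0\); when \(p = m\) there is no such gain, and one must instead exhibit obstructing boundary data of arbitrarily small Sobolev-extension energy, which is where the examples ``going to infinity'' in \(N^n\) enter and which is a delicate point in its own right.

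I would then fix pairwise disjoint balls \(\overline{B(a_k, \rho_k)} \subset B^m\) with \(a_k \to a_\infty\), insert into \(B(a_k, \rho_k)\) a rescaled copy of \(w_k\), interpolate in a thin collar between \(f_k\) and a fixed base point, and set \(u\) equal to that base point on the complement of \(\bigcup_k \overline{B(a_k, \rho_k)}\). Choosing the scales so that the energy deposited near \(a_k\), of order \(\rho_k^{\max(m - p, 0)} \epsilon_k\), is summable makes \(u \in W^{1, p}(B^m; N^n)\) — the point \(a_\infty\) being removable since points have zero \(p\)-capacity when \(p \le m\) — while, crucially, the same scales are chosen so that the weighted gaps \(\rho_k^{\max(m - p, 0)} (\lambda_k - \epsilon_k)\) have divergent sum; this is possible because the ratio \(\lambda_k / \epsilon_k\) may be taken arbitrarily large at each step.

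To finish, suppose \((u_\ell)_{\ell \in \Nset}\) is a sequence of bounded maps with \(u_\ell \to u\) in measure and \(M := \sup_\ell \int_{B^m} \abs{D u_\ell}^p < \infty\), and fix \(K\) large enough that the first \(K\) weighted gaps sum to more than \(M\). For each \(k \le K\), convergence in measure on a neighborhood of \(B(a_k, \rho_k)\), combined with the bound \(M\), allows a Fubini slicing adapted to the profile at \(a_k\) that yields, for all large \(\ell\), a sphere on which \(u_\ell\) restricts to a bounded \(W^{1, p}\) map whose trace is within \(\eta_k\) of \(f_k\); since \(u_\ell\) restricted to the enclosed ball is a bounded extension of that trace, the quantitative statement of the second paragraph forces \(\int_{B(a_k, \rho_k)} \abs{D u_\ell}^p\) to be at least of order \(\rho_k^{\max(m - p, 0)} \lambda_k\). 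Summing over the disjoint balls \(B(a_k, \rho_k)\) with \(k \le K\) yields \(\int_{B^m} \abs{D u_\ell}^p > M\) for \(\ell\) large, a contradiction, so \(u\) admits no weak approximation by bounded maps. The main obstacle is precisely this last extraction: convergence in measure gives no control whatsoever on \(\nabla u_\ell\), so the uniform energy bound must be spent very carefully — and simultaneously over the finitely many singularities in play, through a diagonal choice of scales — to reach slices on which the boundary data of \(u_\ell\) are genuinely, quantitatively close to the prescribed \(f_k\); moreover the ``expensiveness'' of the \(w_k\) obtained in the second paragraph must be set up in a form robust enough to survive this localization, and the borderline case \(p = m\), in which no rescaling is available to make the energies small, compounds all of this.
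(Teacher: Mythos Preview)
Your overall strategy matches the paper's: assuming the trimming property fails, implant copies of increasingly ``expensive'' maps in disjoint balls and show that any bounded sequence converging in measure would need too much energy near each implant. The paper organises this via Lemmas~\ref{lemmaTrimmingSmall}--\ref{lemmaBasePoint}, handling the critical case \(p=m\) first (with Lemma~\ref{lemmaTrimmingSmall} supplying the small-energy obstructing data you flag as ``delicate'') and then lifting to \(p<m\) by rotation and slicing rather than by direct scaling; but the architecture is the same.

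There is, however, a genuine gap in your second paragraph. You claim the stability statement --- that every bounded extension of any boundary datum \(\eta_k\)-close to \(f_k\) still has energy at least \(\lambda_k\) --- follows from ``lower semicontinuity of the \(p\)-energy and a compactness argument on bounded extensions with bounded energy''. This does not close: if \(g_j \to f_k\) and each \(v_j\) is a bounded extension of \(g_j\) with \(\int_{B^p} \abs{Dv_j}^p \le M\), weak compactness in \(W^{1,p}\) yields a limit \(v\) with trace \(f_k\) and energy at most \(M\), but the \(v_j\) carry no \emph{uniform} \(L^\infty\) bound, so nothing forces \(v\) to be bounded --- boundedness not being weakly closed is precisely the phenomenon at stake. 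The paper's substitute (Lemma~\ref{lemmaSingularityRelaxedEnergy}) is constructive: on a Fubini/Chebyshev-good sphere, \(u_\ell\) is \(W^{1,p}\)-close to \(u\); Morrey's embedding on \(\Sp^{p-1}\) upgrades this to uniform closeness; a linear interpolation followed by the nearest-point retraction onto \(N^n\) then manufactures from \(u_\ell\) a genuine bounded map equal to \(u\) on \(B^p\setminus B^p_{1/2}\), with energy controlled by \(\int\abs{Du_\ell}^p\). Your Fubini sketch in the last paragraph is reaching for exactly this; the correction is to replace the abstract robustness-by-compactness claim with this explicit construction. A smaller point: your treatment of \(p<m\) by inserting \(w_k:B^p\to N^n\) into balls of \(B^m\) with energy scaling \(\rho_k^{m-p}\) is underspecified --- one must say how a \(p\)-dimensional map is realised inside \(B^m\), and the final slicing must then be by \(p\)-planes rather than spheres.
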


This settles the question of weak sequential approximation by bounded Sobolev maps.
The counterpart of the weak sequential approximation by smooth maps is still open even for compact manifolds \(N^{n}\), except when \(p = 1\) and \(p = 2\), or when \(N^{n}\) is \((p-1)\)--connected, in which cases the weak sequential approximation always has an affirmative answer~\cite{Hang,PakzadRiviere2003,Hajlasz1994}; a recent counterexample by F.~Bethuel~\cite{Bethuel} gives a surprising negative answer in \(W^{1, 3}(B^{4}; \mathbb{S}^{2})\).{}
His proof involves a map with infinitely many \emph{topological} singularities, which are modeled after the Hopf map from \(\Sp^{3}\) onto \(\Sp^{2}\).

\section{Proof of the main result}

Our proof of Theorem~\ref{theoremMain} relies on a counterexample obtained by gluing 
together maps for which the trimming property degenerates. 
To perform this we first need to ensure that the trimming property fails on maps with a small Sobolev norm.

We shall say that a map \(u : B^{p} \to N^{n}\) is \emph{semi-homogeneous} if \(u(x) = f(x/\abs{x})\) for \(\abs{x} \ge 1/2\) and some function \(f : \partial B^{p} \to N^{n}\); by abuse of notation, we write \(f = u\).
The following property is a consequence of Lemma~6.4 in \cite{BousquetPonceVanSchaftingen}:

\begin{lemma}
\label{lemmaTrimmingSmall}
Let \(p \in \Nset\setminus\{0\}\), \(\beta > 0\) and \(C > 0\).
Assume that for every semi-homogeneous map \(u \in W^{1, p}(B^p; N^n)\) such that 
\[
  \int_{B^p} \abs{D u}^p \le \beta,
\]
there exists a map \(v\in (W^{1, p}\cap L^{\infty})(B^p; N^n)\) such that \(v = u\) on \(B^p \setminus B^p_{1/2}\) and 
\[
  \int_{B^p} \abs{D v}^p \le C \int_{B^p} \abs{Du}^p.
\]
Then, the manifold \(N^n\) has the trimming property.
\end{lemma}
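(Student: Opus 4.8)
The goal is to upgrade the hypothesis — which concerns only \emph{semi-homogeneous} maps with small energy — to the full trimming property, i.e.\ a bound valid for \emph{all} smooth boundary data \(f \in C^\infty(\partial B^p; N^n)\) admitting a \(W^{1,p}\) extension, with no smallness assumption. Since the stated definition of the trimming property and its variants (Proposition~6.1 of \cite{BousquetPonceVanSchaftingen}) are equivalent, it suffices to produce, for such an \(f\), a map \(v \in (W^{1,p}\cap L^\infty)(B^p;N^n)\) agreeing with a given extension \(u\) outside \(B^p_{1/2}\) with controlled energy; and the hypothesis already states precisely this \emph{under} the two restrictions (semi-homogeneity, small energy). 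So the plan has two reduction steps: remove the smallness restriction, then remove the semi-homogeneity restriction.

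\textbf{Step 1: reduce to semi-homogeneous data of small energy by a scaling/localization argument.} Given arbitrary \(u \in W^{1,p}(B^p;N^n)\) with smooth trace \(f\), I would first note that it is enough to handle the case where \(u\) itself is semi-homogeneous: replace \(u\) on \(B^p_{1/2}\) by the \(0\)-homogeneous extension \(x \mapsto f(x/\abs x)\) of its trace — wait, that trace need not be smooth; instead, use that the trimming property is tested only on smooth \(f\) on \(\partial B^p\), for which the semi-homogeneous extension \(x \mapsto f(x/\abs x)\) lies in \(W^{1,p}\) precisely when \(p \le\) the critical dimension, which is the relevant range. So assume \(u\) is semi-homogeneous with trace \(f \in C^\infty(\partial B^p;N^n)\). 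To enforce the smallness \(\int_{B^p}\abs{Du}^p \le \beta\): cover \(\partial B^p\) (equivalently, the annulus \(B^p\setminus B^p_{1/2}\)) by finitely many small balls on which the energy of \(u\) is \(<\beta\); on each such ball the hypothesis, after rescaling to \(B^p_{1/2}\), yields a bounded replacement with energy controlled by \(C\) times the local energy of \(u\), equal to \(u\) on the outer part of that small ball. The delicate point is that these local replacements must be patched into a single global \(v\): the overlaps must be reconciled, the total energy must add up to \(\le C'\int_{B^p}\abs{Du}^p\) for a new constant \(C'\) depending only on \(C\), \(\beta\), \(p\) (and the covering, whose cardinality is universal), and \(v\) must still coincide with \(u\) on \(B^p\setminus B^p_{1/2}\). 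A clean way to organize this is a dyadic/Whitney-type decomposition near \(\partial B^p\) combined with a mean-value selection of good radii, together with the observation that on the region where \(u\) is already \(0\)-homogeneous the data on concentric spheres is constant, which trivializes the reconciliation.

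\textbf{Step 2: from a global semi-homogeneous bound to the trimming property.} Once the uniform bound is established for all semi-homogeneous \(u\) with smooth trace and \emph{no} smallness restriction, I would invoke exactly the content behind Lemma~6.4 of \cite{BousquetPonceVanSchaftingen}: a general extension \(u \in W^{1,p}(B^p;N^n)\) of a smooth \(f\) can be compared, at the level of energy, to the semi-homogeneous extension of \(f\) after a harmless modification, because one may always insert a thin transition layer near \(\partial B^p\) interpolating between \(f\) on \(\partial B^p\) and \(f(x/\abs x)\) on \(\partial B^p_{1/2}\) with energy controlled by that of \(u\) near the boundary; the semi-homogeneous bound then bounds the energy of the modified map on \(B^p_{1/2}\), and gluing gives a bounded \(v\) with \(\int_{B^p}\abs{Dv}^p \le C''\int_{B^p}\abs{Du}^p\). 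This is precisely the equivalence of trimming-property formulations, so the cleanest exposition simply quotes Lemma~6.4 of \cite{BousquetPonceVanSchaftingen} for this direction.

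\textbf{Main obstacle.} The crux is Step 1: turning a \emph{small-energy, semi-homogeneous} hypothesis into a \emph{global, semi-homogeneous} statement. The difficulty is not any single estimate but the bookkeeping of the patching — choosing good radii/cutoffs so that (i) the replacement maps glue into a genuine \(W^{1,p}\cap L^\infty\) map taking values in \(N^n\), (ii) the final constant depends only on \(C\), \(\beta\), \(p\) and not on \(u\), and (iii) the map is left untouched on \(B^p\setminus B^p_{1/2}\). I expect the semi-homogeneity of \(u\) to be exactly what makes this manageable: on concentric spheres of radius \(\ge 1/2\) the map is constant in the radial direction, so the finitely many local corrections can be carried out on disjoint dyadic annuli near \(\partial B^p_{1/2}\) without any compatibility issue between them, and the small-energy hypothesis is applied on each annulus after scaling.
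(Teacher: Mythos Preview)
Your plan is built on a misreading of Lemma~6.4 in \cite{BousquetPonceVanSchaftingen}. That lemma does \emph{not} require a uniform trimming bound for all boundary data; it already carries its own smallness threshold: the existence of some \(\alpha>0\) and \(C'>0\) such that every \(\tilde u\in W^{1,p}(B^p;N^n)\) with \(\tilde u\in W^{1,p}(\partial B^p;N^n)\) and \(\int_{B^p}\abs{D\tilde u}^p+\int_{\partial B^p}\abs{D\tilde u}^p\le\alpha\) admits a bounded replacement \(\tilde v\) with the stated energy control suffices for the trimming property. Consequently your Step~1 (removing the smallness restriction by a covering/patching argument) is entirely unnecessary. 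The paper's proof is a one-line reduction in the \emph{opposite} direction to what you attempt: given \(\tilde u\) satisfying the hypothesis of Lemma~6.4, set \(u(x)=\tilde u(2x)\) for \(\abs{x}\le 1/2\) and \(u(x)=\tilde u(x/\abs{x})\) for \(\abs{x}>1/2\); this \(u\) is semi-homogeneous with \(\int_{B^p}\abs{Du}^p\le\int_{B^p}\abs{D\tilde u}^p+C_1\int_{\partial B^p}\abs{D\tilde u}^p\le\max\{1,C_1\}\alpha\). Choosing \(\alpha=\beta/\max\{1,C_1\}\) puts \(u\) in the scope of the lemma's hypothesis, yielding \(v\), and \(\tilde v:=v\) verifies the conclusion of Lemma~6.4 (since \(v=u=\tilde u\) on \(\partial B^p\)).

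Beyond being superfluous, your Step~1 as sketched has real gaps. The hypothesis applies only to maps that are semi-homogeneous on \(B^p\) with respect to the \emph{origin}; the restriction of a semi-homogeneous \(u\) to a small ball \(B^p_r(a)\) with \(a\ne 0\), rescaled back to \(B^p\), is not semi-homogeneous, so you cannot invoke the hypothesis there. The alternative you suggest, working on dyadic annuli centred at \(0\), does not produce small energy: for \(u(x)=f(x/\abs{x})\) the \(p\)-energy on each annulus \(B^p_r\setminus B^p_{r/2}\) is a fixed multiple of \(\int_{\partial B^p}\abs{Df}^p\), independent of \(r\). And even granting local bounded replacements, patching manifold-valued maps across overlaps is not a partition-of-unity matter; you would need an additional mechanism to keep values in \(N^n\). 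None of this is needed once you use Lemma~6.4 as stated.
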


\begin{proof}
We recall that Lemma~6.4 in \cite{BousquetPonceVanSchaftingen} asserts that the manifold \(N^n\) has the trimming property whenever there exist \(\alpha > 0\) and \(C' > 0\) such that for every map \(\tilde u \in W^{1, p}(B^p; N^n)\) (not necessarily semi-homogeneous) which satisfies \(\tilde u \in W^{1, p}(\partial B^p; N^n)\) and
\begin{equation}
  \label{eqTrimmingBoundedAlpha}
 \int_{B^p} \abs{D \tilde u}^p + \int_{\partial B^p} \abs{D \tilde u}^p \le \alpha,  
\end{equation}
 there exists a map \(\tilde v \in (W^{1, p}\cap L^{\infty})(B^p; N^n)\) such that \(\tilde v = \tilde u\) on \(\partial B^p\) and 
\begin{equation}
  \label{eqTrimmingBoundedAlphaConclusion}
  \int_{B^p} \abs{D \tilde v}^p 
  \le C' \biggl( \int_{B^p} \abs{D\tilde u}^p +  \int_{\partial B^p} \abs{D \tilde u}^p \biggr). 
\end{equation}
But in this case one can take \(u : B^{p} \to N^{n}\) defined by
\[{}
u(x)
= 
\begin{cases}
  \tilde u(2x)  & \text{if \(\abs{x} \le 1/2\),}\\
  \tilde u(x/\abs{x}) & \text{if \(\abs{x} > 1/2\),}
\end{cases}
\]
which is semi-homogeneous, also belongs to \(W^{1, p}(B^p; N^{n})\) and satisfies
\[{}
\resetconstant
\int_{B^{p}}{\abs{D u}^{p}}
\le \int_{B^{p}}{\abs{D \tilde u}^{p}} + \Cl{cte-297} \int_{\partial B^{p}}{\abs{D\tilde u}^{p}}
\le \max{\{1, \Cr{cte-297}\}} \alpha,
\]
where the constant \(\Cr{cte-297} > 0\) only depends on \(p\).{}

We thus have that, for every map \(\tilde u\) satisfying the estimate   \eqref{eqTrimmingBoundedAlpha} with \(\alpha = \beta/\max{\{1, \Cr{cte-297}\}}\), the semi-homogeneous map \(u\) defined above satisfies the assumptions of Lemma~\ref{lemmaTrimmingSmall}, and thus there exists a map \(v\) as in the statement.
The inequality \eqref{eqTrimmingBoundedAlphaConclusion} is thus verified with \(\Tilde{v} = v\) and \(C' = C \max{\{1, \Cr{cte-297}\}}\), hence by Lemma~6.4 in \cite{BousquetPonceVanSchaftingen} the manifold \(N^{n}\) satisfies the trimming property.
\end{proof}

We now quantify the behavior of the \(p\)-Dirichlet energy for the weak sequential approximation of a given map \(u\) in terms of  bounded extensions of \(u|_{\partial B^{p}}\), which are related to the trimming property.

\begin{lemma}
\label{lemmaSingularityRelaxedEnergy}
Let \(p \in \Nset\setminus\{0\}\). 
If \(u \in W^{1, p}(B^p; N^n)\) is semi-homogeneous and if \((u_\ell)_{\ell \in \Nset}\) is a sequence of maps in \((W^{1, p} \cap L^\infty) (B^p; N^n)\) that converges in measure to \(u\), then 
\begin{multline*}
  \liminf_{\ell \to \infty} \int_{B^p} \abs{D u_\ell}^p \ge c \inf \Bigl\{\int_{B^p} \abs{D v}^p \st v \in (W^{1, p} \cap L^\infty) (B^p; N^n) \\\text{ and \(u = v\) in \(B^p \setminus B^p_{1/2}\)} \Bigr\}.
\end{multline*}
\end{lemma}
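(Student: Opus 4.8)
The plan is to extract from the approximating sequence $(u_\ell)$ a ``good'' radius $r \in (1/2, 1)$ on which the traces $u_\ell|_{\partial B^p_r}$ converge (in $W^{1,p}$ on the sphere, up to a subsequence) to $u|_{\partial B^p_r}$, and then to glue. More precisely, by the semi-homogeneity of $u$, the restriction $u|_{B^p \setminus B^p_{1/2}}$ is completely determined by $f = u|_{\partial B^p}$, and the competitors $v$ on the right-hand side are exactly the bounded extensions of $f$ agreeing with the homogeneous map on the annulus $B^p \setminus B^p_{1/2}$. The idea is that, for $\ell$ large, $u_\ell$ restricted to a suitable ball $B^p_r$ is, after rescaling and after a small correction on the annulus $B^p_r \setminus B^p_{r/2}$ to make it match the homogeneous profile of $u$, an admissible competitor $v$; its energy is controlled by $\int_{B^p_r}\abs{Du_\ell}^p$ plus the (small) energy of the correction layer, both of which are bounded by $\int_{B^p}\abs{Du_\ell}^p$ up to a constant, yielding the inequality with some explicit dimensional constant $c > 0$.

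The key steps, in order, are as follows. First, set $E_\ell = \int_{B^p}\abs{Du_\ell}^p$ and assume $\liminf_\ell E_\ell < \infty$ (otherwise there is nothing to prove); pass to a subsequence realizing the $\liminf$ and along which $u_\ell \to u$ almost everywhere. Second, by Fubini applied to the slicing in the radial variable on the annulus $\tfrac34 < \abs{x} < 1$, for each $\ell$ there is a set of radii $r$ of positive measure with $\int_{\partial B^p_r}\abs{Du_\ell}^p \le C E_\ell$ and such that $u_\ell|_{\partial B^p_r} \in W^{1,p}(\partial B^p_r; N^n)$; a further pigeonhole/measure-theoretic argument (Egorov plus Fubini) produces a \emph{single} radius $r \in (\tfrac34, 1)$ such that along a subsequence $u_\ell|_{\partial B^p_r} \to u|_{\partial B^p_r}$ in $L^p(\partial B^p_r; \Rset^\nu)$ with $\sup_\ell \int_{\partial B^p_r}\abs{Du_\ell}^p < \infty$, hence (up to a further subsequence) weakly in $W^{1,p}(\partial B^p_r)$ and, by Morrey/Sobolev on the $(p-1)$-sphere, uniformly if $p > p-1$, i.e.\ in $C^0$. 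Third, build the competitor: rescale $B^p_r$ to $B^p$; on $B^p \setminus B^p_{1/2}$ replace $u_\ell$ by an interpolation between $u_\ell|_{\partial B^p_r}$ and the homogeneous extension $y \mapsto f(y/\abs{y}) = u(y)$ of $f$, using a geodesic homotopy in $N^n$ (legitimate because the two boundary traces are $C^0$-close, so they lie in a tubular neighbourhood where the nearest-point projection is smooth); this interpolation layer has energy $o(1)$ or at worst $O(E_\ell)$, and on $B^p_{1/2}$ we keep the rescaled $u_\ell$, which is bounded. The resulting map $v_\ell$ is in $(W^{1,p}\cap L^\infty)(B^p; N^n)$, equals $u$ on $B^p \setminus B^p_{1/2}$, and satisfies $\int_{B^p}\abs{Dv_\ell}^p \le C' E_\ell$. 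Taking the infimum over admissible $v$ on the left and passing to the limit gives the claim with $c = 1/C'$.

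The main obstacle I expect is the simultaneous control, on one fixed radius $r$, of \emph{both} the trace convergence (in a topology strong enough to perform the geodesic gluing, i.e.\ at least $L^p$ plus equi-boundedness of the spherical $p$-energy, ideally $C^0$) \emph{and} the energy bound $\int_{\partial B^p_r}\abs{Du_\ell}^p \lesssim E_\ell$: these come from two different Fubini arguments and must be reconciled on a common good set of radii, which requires a careful Chebyshev/Egorov bookkeeping and is where most of the technical care goes. A secondary subtlety is verifying that the geodesic-homotopy correction layer on $B^p_r \setminus B^p_{r/2}$ does not inject uncontrolled energy: one needs the $C^0$-closeness of $u_\ell|_{\partial B^p_r}$ to $f$ together with the equi-bounded tangential derivatives to bound the radial derivative of the homotopy by $\abs{u_\ell|_{\partial B^p_r} - f}$ and the tangential derivative by the derivatives of the endpoints, then integrate; this is routine once the good radius is fixed, but it is the place where the explicit value of $c$ is determined. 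Finally, one should double-check that the competitor class in the statement --- maps $v$ with $v = u$ on $B^p \setminus B^p_{1/2}$ --- is exactly matched (after rescaling $B^p_r$ back to $B^p$) by the construction, using that $u$ is homogeneous on the whole annulus $B^p \setminus B^p_{1/2}$ so that the rescaling of the homogeneous part is again the same homogeneous part.
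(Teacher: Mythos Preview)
Your approach is correct and follows essentially the same strategy as the paper: select a good radius, rescale $u_\ell$ restricted to that ball, and glue a transition layer on the outer annulus to match the homogeneous profile of $u$. There are, however, two points where the paper's argument is simpler than yours, and the first one precisely dissolves the ``main obstacle'' you flagged.

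\emph{Radius selection.} You try to find a \emph{single} radius $r$ working for all $\ell$ (up to subsequences), which forces the Egorov/Fubini bookkeeping you describe. The paper instead allows the radius $r_\ell \in [1/2,1]$ to depend on $\ell$: by two applications of Chebyshev on the annulus one gets, for each $\ell$, a radius with
\[
\int_{\partial B^p_{r_\ell}} \abs{Du_\ell}^p \le 5 \int_{B^p\setminus B^p_{1/2}}\abs{Du_\ell}^p
\quad\text{and}\quad
\int_{\partial B^p_{r_\ell}} \abs{u_\ell-u}^p \le 5 \int_{B^p\setminus B^p_{1/2}}\abs{u_\ell-u}^p.
\]
Since $u$ is semi-homogeneous, $u(r_\ell\,\cdot/|\cdot|)=u(\cdot/|\cdot|)$ regardless of $r_\ell$, so the rescaling still matches $u$ on the outer annulus. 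The second inequality, combined with the Morrey embedding on $\partial B^p$, already gives uniform closeness of $u_\ell(r_\ell\,\cdot)$ to $f$; no common radius, no Egorov, no further subsequences are needed.

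\emph{Interpolation layer.} Rather than a geodesic homotopy in $N^n$, the paper uses straight-line interpolation in the ambient $\Rset^\nu$ between $u_\ell(r_\ell\,\cdot/|\cdot|)$ and $u(\cdot/|\cdot|)$ on $1/4\le |x|\le 1/2$, which makes the energy estimate immediate (linear in the endpoints and in $|u_\ell-u|$). The resulting map is only $\Rset^\nu$-valued, but the uniform closeness just obtained puts it inside a tubular neighbourhood of $N^n$, and composing with the nearest-point retraction $\pi$ (with $\|D\pi\|_{L^\infty}\le 2$) lands back in $N^n$ at the cost of a fixed multiplicative constant. Your geodesic-homotopy version is also fine, but this extrinsic route avoids having to differentiate the exponential map.

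In short: your proof is sound, but the $\ell$-dependent radius and the linear-interpolation-plus-projection trick make the paper's version noticeably shorter and remove exactly the technical difficulty you singled out.
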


\begin{proof}
We first recall that smooth maps are strongly dense in \((W^{1, p} \cap L^\infty)(B^p; N^n)\); see Proposition~3.2 in \cite{BousquetPonceVanSchaftingen}, in the spirit of \cite{SchoenUhlenbeck}.
By a diagonalization argument, we can thus assume  that, for every \(\ell \in \Nset\), \(u_\ell \in C^\infty (\overline{B^p}; N^n)\).
We may further restrict our attention to the case where the sequence \((Du_\ell)_{\ell\in \Nset}\) is bounded in \(L^{p}(B^p; \Rset^{\nu \times p})\). 
Since the sequence \((u_\ell)_{\ell \in \Nset}\) converges in measure to \(u\), it then follows that \((u_\ell)_{\ell \in \Nset}\) converges strongly to \(u\) in \(L^{p}(B^p; N^{n})\).{}

Indeed, the function \((|u_\ell-u|-1)_{+}\) vanishes on a set of measure greater than \(|B^p|/2\) for every \(\ell\) sufficiently large. 
Applying to this function the Sobolev--Poincar\'e inequality
\[{}
\resetconstant
\norm{f}_{L^{q}(B^{p})}
\le \C \norm{Df}_{L^{p}(B^{p})}
\]
valid for \(1 \le q < +\infty\) and \(f \in W^{1, p}(B^{p})\) that vanishes on a set of measure greater than  \(|B^p|/2\) (see p.~177 in \cite{Ziemer}), we deduce that \((u_{\ell})_{\ell \in \Nset}\) is bounded in \(L^{q}(B^p; N^{n})\) for \(q > p\).{}
The strong convergence of \((u_\ell)_{\ell \in \Nset}\) in \(L^{p}(B^m; N^{n})\) now follows from its convergence in measure and Vitali's convergence theorem for uniformly integrable sequences of functions.

By the integration formula in polar cooordinates and the Chebyshev inequality, there exists a radius \(r_\ell \in [1/2, 1]\) such that 
\begin{equation}
\label{eq303}
\int_{\partial B^p_{r_\ell}} \abs{D u_\ell}^p
\le 5 \int_{B^p \setminus B^p_{1/2}} \abs{D u_\ell}^p  \quad \text{and} \quad 
\int_{\partial B^p_{r_\ell}} \abs{u_\ell-u}^p
\le 5 \int_{B^p \setminus B^p_{1/2}} \abs{u_\ell-u}^p.
\end{equation}
We then define the map \(v_\ell: B^p \to \Rset^{\nu}\) for \(x \in B^p\) by 
\[
 v_\ell (x)=
 \begin{cases}
   u_\ell (4r_\ell x) & \text{if \(\abs{x} \le 1/4\)},\\
   (2 - 4 \abs{x}) u_\ell (r_\ell x /\abs{x}) + (4\abs{x}-1)u (x/\abs{x}) & \text{if \(1/4 \le \abs{x} \le 1/2\)},\\
   u (x) & \text{if \(\abs{x} \ge 1/2\)}.
 \end{cases}
\]
By the semi-homogeneity of \(u\), we have \(v_\ell \in (W^{1, p} \cap L^{\infty}) (B^p; \Rset^{\nu})\), and then by the estimates of Eq.~\eqref{eq303}, 
\[{}
\int_{B^p} \abs{D v_\ell}^p
\le \Cl{eq-330} \bigg(\int_{B^p}{\abs{D u_\ell}^p} + \int_{B^p \setminus B^{p}_{1/2}}{\abs{D u}^p}
+ \int_{B^p \setminus B^{p}_{1/2}}{|u_\ell-u|^p} \bigg).
\]
Since \((u_{\ell})_{\ell\in \Nset}\) is bounded in \(W^{1,p}(B^p;N^n)\) and converges in measure to \(u\), by weak lower semincontinuity of the \(L^{p}\) norm we obtain  
\[
\int_{B^p}\abs{D u}^p
\leq \liminf_{\ell\to \infty}\int_{B^p}\abs{D u_\ell}^p.
\]
Hence, by the strong convergence of \((u_{\ell})_{\ell \in \Nset}\) to \(u\) in \(L^{p}(B^p; N^{n})\), we get
\[
\liminf_{\ell\to \infty}\int_{B^p}\abs{D v_\ell}^p
\leq 2 \Cr{eq-330} \liminf_{\ell\to \infty}{\int_{B^p}\abs{D u_\ell}^p }.
\]

Recalling that the manifold \(N^{n}\) is embedded into \(\Rset^\nu\), there exists an open set \(U\supset N^{n}\) and a retraction \(\pi \in C^1 (U; N^n)\) such that \(\norm{D\pi}_{L^\infty (U)} \le 2\) and \(\pi \vert_{N^{n}} = \operatorname{id}\).
By the second inequality in \eqref{eq303}, the sequence \((u_\ell (r_\ell\ \cdot))_{\ell \in \Nset}\) converges to \(u(r_{\ell}\, \cdot) = u\) in \(L^{p}(\partial B^p; N^{n})\).
By the Morrey embedding, this convergence is also uniform on \(\partial B^{p}\), and \(u(\partial B^{p})\) is a compact subset of \(N^{n}\). 
Hence, for \(\ell\) large enough we have \(v_\ell (B^p) \subset U\), and we can thus take \(v = \pi \circ v_\ell\) to reach the conclusion; see e.g.~Lemma~2.2 in \cite{BousquetPonceVanSchaftingen}. 
\end{proof}

Two maps in \(W^{1, p}(B^{p}; N^{n})\) may not be glued to each other because they could have different boundary values.
We remedy to this problem by first gluing together two copies of the same map with reversed orientations, and then extending the resulting map to achieve a new map which is constant on \(B^{p} \setminus B^{p}_{1/2}\) (in particular semi-homogeneous), in the spirit of the dipole construction \cite{BrezisCoron,BrezisCoronLieb,Bethuel}.

\begin{lemma}
\label{lemmaPairing}
Let \(p\in \Nset\setminus \{0\}\). 
For every \(u\in W^{1,p}(B^p; N^n)\), there exists a map \(v\in W^{1,p}(B^p; N^n)\) such that 
\begin{enumerate}[\((i)\)]
\item \label{constantieie} the map \(v\) is constant on \(B^p \setminus B^{p}_{1/2}\),
\item for every \(x \in B_{1/8}^{p}\), \(v(x)=u(8x)\),
\item 
\[
\int_{B^p}\abs{Dv}^p \leq C \int_{B^p}|Du|^p.
\]
\end{enumerate}
\end{lemma}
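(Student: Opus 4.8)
The plan is to implement the ``dipole'' idea literally: glue two copies of $u$ with opposite orientations along a common boundary sphere, then push the non-constant part to the origin so that what remains outside is constant.

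First I would rescale and pick a good radius. After composing $u$ with a suitable bi-Lipschitz map of $B^p$, we may assume $u$ is defined on $B^p$ and we work on the concentric annuli $B^p_{1/8}\subset B^p_{1/4}\subset B^p_{1/2}\subset B^p$. By the integration formula in polar coordinates and Chebyshev's inequality, there is a radius $r\in[1/2,1]$ with $\int_{\partial B^p_r}\abs{Du}^p\le 2\int_{B^p}\abs{Du}^p$, so that $u|_{\partial B^p_r}$ lies in $W^{1,p}(\partial B^p_r;N^n)$ with controlled energy; denote $f=u(r\,\cdot)\in W^{1,p}(\partial B^p;N^n)$. Now split $B^p$: in the ``inner'' half $B^p_{1/4}$ we place a rescaled copy of $u|_{B^p_r}$, and in the annulus $B^p_{1/2}\setminus B^p_{1/4}$ we place a reflected (orientation-reversing) rescaled copy of the same, so that the two pieces agree on the interface $\partial B^p_{1/4}$ (both equal $f$, read off along rays) and the outer piece has, on $\partial B^p_{1/2}$, constant-in-$r$ trace again equal to $f$. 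Concretely, for $1/4\le\abs{x}\le1/2$ set $v(x)=u\bigl(\psi(\abs{x})\,x/\abs{x}\bigr)$ where $\psi:[1/4,1/2]\to[0,r]$ is the affine map with $\psi(1/4)=r$, $\psi(1/2)=0$; this makes $v$ depend on $\abs{x}$ only through a reflection of the radial variable, so on $\partial B^p_{1/2}$ the trace is the $\abs{x}$-independent map $x\mapsto u(0)$\dots{}

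The previous sentence reveals the one real subtlety: $u(0)$ need not make sense, so ``reflecting to the origin'' must instead be ``collapsing a thin shell to the constant $f$-average'' is \emph{not} what we want either. The clean fix, which I would carry out, is the genuine dipole: keep the outer annulus equal to the \emph{homogeneous} extension $x\mapsto f(x/\abs{x})$ on $B^p_{1/2}\setminus B^p_{1/4}$, and in $B^p_{1/4}$ place the rescaled copy $x\mapsto u(4r x)$; then on $\partial B^p_{1/4}$ both sides equal $f(x/\abs{x})$ (the inner one because $u(r x/\abs{x})=f(x/\abs{x})$), so $v$ is well-defined and lies in $W^{1,p}$, with $\int_{B^p}\abs{Dv}^p\le C\bigl(\int_{B^p}\abs{Du}^p+\int_{B^p}\abs{Df}^p\bigr)\le C\int_{B^p}\abs{Du}^p$ by the choice of $r$. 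But this $v$ is only homogeneous, not constant, on $B^p_{1/2}\setminus B^p_{1/4}$ — to get property \((i)\) one more gluing is needed: repeat the construction with the \emph{reversed} copy $\tilde u(x)=u(\sigma x)$ for a reflection $\sigma$ of $\Rset^p$, so $\tilde f=f\circ\sigma$ extends homogeneously with the opposite degree, and the juxtaposition of $u$ on $B^p_{1/8}$ and $\tilde u$ on $B^p_{1/4}\setminus B^p_{1/8}$ produces a map whose outer homogeneous piece is homotopic to a constant through homogeneous maps of bounded energy (the two reversed-orientation extensions cancel), which I then concatenate with that bounded-energy homotopy on $B^p_{1/2}\setminus B^p_{1/4}$ to land on a genuine constant on $B^p\setminus B^p_{1/2}$.

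The main obstacle is this last step — producing the bounded-energy homotopy from $f$-homogeneous-extended to constant after doubling. The key point making it work is that $f$, having a $W^{1,p}$ extension to the ball, and then its reversal, together bound a disc: concretely $x\mapsto f(x/\abs{x})$ on the top hemisphere and $x\mapsto f(\sigma x/\abs{x})$ on the bottom hemisphere of $\partial B^p_{1/2}$ glue to a map $S^{p-1}\to N^n$ that extends to $B^p_{1/2}$ with energy $\le C\int_{B^p}\abs{Du}^p$ — indeed it extends by the very Sobolev extension $u$ (suitably rescaled and reflected) we started from. Filling that sphere is exactly what lets the outer region be re-parametrized as constant. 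Once property \((i)\) holds, properties \((ii)\) and \((iii)\) are then immediate from the scalings chosen, with the constant $C$ depending only on $p$.
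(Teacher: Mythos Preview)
Your dipole intuition is exactly the right picture, and it is the one the paper invokes in the remark following the proof. But the execution has a genuine gap, and it is the same gap you already diagnosed in your first attempt. After the doubling, you assert that the outer homogeneous piece can be homotoped to a constant ``with bounded energy'' because the doubled boundary map ``extends to the ball via the very Sobolev extension \(u\) we started from.'' That is circular: the only filling you have in hand is \(u\) (or \(u\circ\sigma\)) itself, and using it to fill the outer annulus \(B^p_{1/2}\setminus B^p_{1/4}\) forces you to evaluate \(u\) at (or arbitrarily near) the origin on the sphere \(\partial B^p_{1/2}\) --- precisely the ``\(u(0)\) need not make sense'' obstruction you flagged earlier. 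Reflecting by a linear \(\sigma\) does nothing to cure this, since \(u\circ\sigma\) is no better defined at \(0\) than \(u\) is. There is also a dimensional slip in your last paragraph: the two hemispheres of \(\partial B^p_{1/2}\cong S^{p-1}\) are \((p-1)\)-discs, not copies of the domain of \(f\), so the gluing you describe there does not parse.

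The paper closes this gap with a different tool: the \emph{opening} construction (Brezis--Li, used as in \cite{BousquetPonceVanSchaftingen2015}), which replaces \(u\) by a map \(\tilde u\in W^{1,p}(B^p;N^n)\) that is \emph{constant on} \(B^p_{1/2}\), agrees with \(u\) near \(\partial B^p\), and has \(\|D\tilde u\|_{L^p}\le C\|Du\|_{L^p}\). With this in hand the dipole becomes a one-line formula: set \(v(x)=u(8x)\) for \(\abs{x}<1/8\) and \(v(x)=\tilde u\bigl(16x/(1+(8\abs{x})^2)\bigr)\) for \(\abs{x}\ge 1/8\). The inversion sends \(\partial B^p_{1/8}\) to \(\partial B^p\) (where \(\tilde u=u\), so the traces match) and sends \(\{\abs{x}\ge 1/2\}\) into \(B^p_{1/2}\) (where \(\tilde u\) is constant), giving property~(\textit{i}) for free. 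No homotopy argument, no good-radius selection, and no evaluation of \(u\) at the origin is needed. The missing idea in your sketch is precisely this opening step; once you have it, your reflected-copy picture and the paper's stereographic formula are the same construction.
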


\begin{proof}
We first apply the opening construction to the map \(u\) around \(0\); see Proposition~2.2 in \cite{BousquetPonceVanSchaftingen2015} (the opening technique has been introduced in \cite{BrezisLi}). This gives a map \(\tilde{u}\in W^{1,p}(B^p;N^n)\) such that \(\tilde{u}\) is constant on \(B_{1/2}^p\), agrees with \(u\) in a neighborhood of \(\partial B^{p}\) and satisfies \(\resetconstant \|D\tilde{u}\|_{L^{p}(B^p)}\leq \C \|Du\|_{L^{p}(B^p)}\).
We then introduce the map \(v : B^{p} \to N^{n}\) defined by
\[
v(x)=
  \begin{cases}
    u(8 x)                                       & \text{if \(\abs{x} < 1/8\),}\\
    \tilde{u}\bigl(16 x /(1+(8\abs{x})^2)\bigr)  & \text{if \(\abs{x} \ge 1/8\).}
  \end{cases}
\]
Since \(u=\tilde{u}\) on \(\partial B^p\) in the sense of traces,  the map \(v\) belongs to \(W^{1,p}(B^p;N^n)\). Moreover, the fact that 
\(\tilde{u}\) is constant on \(B^{p}_{1/2}\) implies that \(v\)  is constant on \({B^p\setminus B^{p}_{1/2}}\) and thus
\[
  \int_{B^p}\abs{Dv}^p 
  = \int_{B^{p}_{1/2}}\abs{Dv}^p 
  \leq  \int_{B^{p}}\abs{Du}^p 
    + \C \int_{B^{p}}\abs{D\tilde{u}}^p 
  \leq \C \int_{B^{p}}\abs{Du}^p.
\]
The proof is complete.
\end{proof}

Here is a geometric interpretation of the above proof: we consider two copies of \(u\) on the two hemispheres of the sphere \(\Sp^p\), which coincide on the equator. We then \emph{open} the map \(u\) in a neighborhood of the north pole.  Using a stereographic projection centered at the north pole which maps \(\Sp^p\) onto \(\Rset^p\), we then get a map defined on the whole \(\Rset^p\), which agrees with \(u\) on \(B^p\) and which is constant outside a larger ball. 
Then, by scaling, we obtain the desired map \(v\).

We now modify a Sobolev map which is constant near the boundary into a new map with a \emph{prescribed} constant value near the boundary.

\begin{lemma}
  \label{lemmaBasePoint}
  Let \(p \in \Nset\setminus\{0, 1\}\), \(N^{n}\) be a connected manifold, and \(y_{0} \in N^{n}\). 
  For every \(\epsilon > 0\) and every map \(u \in W^{1, p}(B^p; N^{n})\) that is constant on \(B^{p} \setminus B^{p}_{1/2}\), there exist \(\sigma \in (0, 1/2)\) and a smooth function \(\zeta : [\sigma, 1] \to N^{n}\) such that \(\zeta = y_{0}\) on \([1/2, 1]\) and the map \(w : B^{p} \to N^{n}\) defined by
  \[{}
  w(x) =
  \begin{cases}
    u(x/\sigma)     & \text{if \(\abs{x} < \sigma\),}\\
    \zeta(\abs{x})  & \text{if \(\abs{x} \ge \sigma\),}
  \end{cases}
  \]
  belongs to \(W^{1, p}(B^p; N^{n})\) and satisfies
  \[{}
  \int_{B^{p}} \abs{Dw}^{p}
  \le \int_{B^{p}} \abs{Du}^{p} + \epsilon.
  \]
\end{lemma}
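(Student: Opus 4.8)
\emph{Plan.} Write $a \in N^{n}$ for the constant value of $u$ on $B^{p} \setminus B^{p}_{1/2}$; in particular the trace of $u$ on $\partial B^{p}$ equals $a$. The idea is to rescale $u$ into a tiny ball $B^{p}_{\sigma}$ --- which costs \emph{no} energy, since the change of variables $y = x/\sigma$ gives
\[
\int_{B^{p}_{\sigma}} \bigabs{D\bigl(u(\cdot/\sigma)\bigr)}^{p} = \int_{B^{p}} \abs{Du}^{p}
\]
(the $p$-Dirichlet energy in dimension $p$ is scale invariant) --- and then to fill the annulus \(\{\sigma \le \abs{x} \le 1\}\) with a purely radial map \(x \mapsto \zeta(\abs{x})\) joining the value $a$ on \(\partial B^{p}_{\sigma}\) to the value $y_{0}$ on \(B^{p} \setminus B^{p}_{1/2}\). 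Since \(x\mapsto \zeta(\abs{x})\) has gradient of Euclidean norm \(\abs{\zeta'(\abs{x})}\), integrating in polar coordinates shows that its $p$-energy is \(\mathcal{H}^{p-1}(\partial B^{p})\int_{\sigma}^{1}\abs{\zeta'(r)}^{p}r^{p-1}\dif r\), and the whole point is to produce a \emph{smooth} \(\zeta\colon [\sigma,1]\to N^n\), with \(\zeta(\sigma)=a\) and \(\zeta\equiv y_{0}\) near \(1\), making this last integral as small as we wish.

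\emph{Construction of \(\zeta\).} Since \(N^{n}\) is connected, fix a smooth curve \(\gamma\colon[0,1]\to N^{n}\) with \(\gamma(0)=a\) and \(\gamma(1)=y_{0}\), and set \(L:=\max_{[0,1]}\abs{\gamma'}\). Fix also a smooth nondecreasing \(\rho\colon\Rset\to[0,1]\) with \(\rho\equiv 0\) on \((-\infty,0]\) and \(\rho\equiv 1\) on \([1,+\infty)\), and let \(M:=\norm{\rho'}_{L^{\infty}(\Rset)}\). For \(\sigma\in(0,1/4)\), set \(t_{\sigma}(r):=\log(r/\sigma)/\log(1/(4\sigma))\) and define \(\zeta\colon[\sigma,1]\to N^{n}\) by \(\zeta(r):=\gamma\bigl(\rho(t_{\sigma}(r))\bigr)\). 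Then \(\zeta\) is smooth on \([\sigma,1]\) as a composition of smooth maps, \(\zeta(\sigma)=\gamma(\rho(0))=a\), and \(\zeta\equiv\gamma(\rho(1))=y_{0}\) on \([1/4,1]\supset[1/2,1]\) (since \(t_{\sigma}(r)\ge 1\) there). Defining \(w\) by the formula in the statement, the trace of \(u(\cdot/\sigma)\) on \(\partial B^{p}_{\sigma}\) from inside equals \(a\) and the trace of \(\zeta(\abs{\cdot})\) from outside equals \(\zeta(\sigma)=a\); since \(w\) maps into \(N^{n}\), the standard gluing argument (as in Lemma~2.2 of \cite{BousquetPonceVanSchaftingen}) gives \(w\in W^{1,p}(B^{p};N^{n})\).

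\emph{Energy estimate and conclusion.} On the annulus, \(\abs{\zeta'(r)}=\abs{\gamma'(\rho(t_{\sigma}(r)))}\,\abs{\rho'(t_{\sigma}(r))}\,t_{\sigma}'(r)\le LM/\bigl(r\log(1/(4\sigma))\bigr)\), and this bound vanishes for \(r\notin[\sigma,1/4]\) because \(\rho'\) is supported in \([0,1]\). Hence
\[
\int_{\sigma}^{1}\abs{\zeta'(r)}^{p}r^{p-1}\dif r
\le \frac{(LM)^{p}}{\log(1/(4\sigma))^{p}}\int_{\sigma}^{1/4}\frac{\dif r}{r}
= \frac{(LM)^{p}}{\log(1/(4\sigma))^{p-1}},
\]
which tends to \(0\) as \(\sigma\to 0^{+}\) because \(p\ge 2\). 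Choosing \(\sigma\in(0,1/4)\) small enough that \(\mathcal{H}^{p-1}(\partial B^{p})(LM)^{p}\log(1/(4\sigma))^{-(p-1)}\le\epsilon\), and combining the resulting bound on the collar energy with the scale invariance of the energy of \(u(\cdot/\sigma)\), yields \(\int_{B^{p}}\abs{Dw}^{p}\le\int_{B^{p}}\abs{Du}^{p}+\epsilon\). The only genuine constraint in this scheme is the displayed estimate: the logarithmic profile \(t_{\sigma}\) makes the radial \(p\)-energy of the collar \(O\bigl(\log(1/\sigma)^{1-p}\bigr)\), which decays exactly when \(p\ge 2\); this is where the hypothesis \(p\notin\{0,1\}\) enters, and for \(p=1\) no small-energy collar joining distinct points exists.
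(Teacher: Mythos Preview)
Your proof is correct and follows essentially the same approach as the paper: rescale \(u\) into \(B^{p}_{\sigma}\) at no cost by conformal invariance, then connect the constant value \(a\) to \(y_{0}\) across the annulus by a radial map with logarithmic profile, whose \(p\)-energy is \(O\bigl((\log\tfrac{1}{\sigma})^{1-p}\bigr)\to 0\) since \(p\ge 2\). Your insertion of the cutoff \(\rho\) is a minor refinement that makes \(\zeta\) literally constant on \([1/2,1]\) as the statement demands, whereas the paper's bare choice \(\zeta(r)=\gamma(\log r/\log\sigma)\) only hits \(y_{0}\) at \(r=1\); your version is thus slightly more careful, but the core idea and estimate are identical.
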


\begin{proof}
 Let \(y_{1} \in N^{n}\) be such that \(u = y_{1}\) on \(B^{p} \setminus B^{p}_{1/2}\).
Since the manifold \(N^n\) is connected, there exists a smooth curve \(\gamma \in C^\infty ([0, 1]; N^n)\) such that \(\gamma (0) = y_0\) and \(\gamma (1) = y_1\).
For \(\sigma \in (0, 1/2)\) and \(x \in B^{p}\), we then define 
\[
 w_\sigma (x) 
 = 
 \begin{cases}
   u ({x}/{\sigma}) & \text{if \(\abs{x} < \sigma\)},\\
   \gamma \bigl({\log \abs{x}}/{\log \sigma}\bigr) & \text{if \(\abs{x} \ge \sigma\)}.
 \end{cases}
\]
We compute that 
\[{}
\resetconstant
\int_{B^p} \abs{D w_\sigma}^p
= \int_{B^p} \abs{D u}^p + \int_{B^p \setminus B^p_{\sigma}} \frac{\bigabs{\gamma' \bigl(\frac{\log \abs{x}}{\log \sigma}\bigr)}^p }{\abs{\log {\sigma}}^p  \abs{x}^p} \dif x
\le \int_{B^p} \abs{D u}^p + \frac{\C\norm{\gamma'}_{L^\infty}^p}{\abs{\log{\sigma}}^{p - 1}}.
\]
Since \(p > 1\), we can take \(\sigma < 1/2\) small enough so that the last term is smaller than \(\epsilon\).
\end{proof}

\begin{proof}[Proof of Theorem~\ref{theoremMain}]
If \(N^{n}\) satisfies the trimming property, then the sequence can be constructed to converge strongly to \(u\), and in particular weakly, by Theorem~\ref{theoremIntegerTrimming}.
To prove the direct implication, we assume by contradiction that the manifold \(N^n\) does not satisfy the trimming property,
and in this case we construct a map \(u \in W^{1, p} (B^m; N^n)\) that does not have a weak sequential approximation.

We first consider the case \(m = p\).
In view of Lemma~\ref{lemmaTrimmingSmall} with \(\beta = {1}/{2^j}\) and \(C=2^j\), where \(j \in \Nset\), by contradiction assumption there exists a semi-homogeneous map \(z_{j} \in W^{1, p} (B^p; N^{n})\) such that 
\[
 \int_{B^p} \abs{D z_j}^p \le \frac{1}{2^j},
\]
and for every \(v \in (W^{1, p}\cap L^{\infty})(B^p; N^n)\) such that
\(v = z_{j}\) on \(B^p \setminus B^p_{1/2}\), we have 
\begin{equation}
  \label{eqLowerBoundRenormalizedEnergy}
  \int_{B^p} \abs{D v}^p > 2^{j} \int_{B^p} \abs{Dz_{j}}^p.
\end{equation}
We then take an integer \(k_j \ge 1\) such that 
\begin{equation}
  \label{eqMultiplicitySingularities}
  \frac{1}{2^{j + 1}} 
  \le k_j \int_{B^p} \abs{D z_j}^p
  \le \frac{1}{2^j}.
\end{equation}

By Lemma~\ref{lemmaPairing}, we next replace \(z_{j}\) by a  map \(v_j\) which is constant on \(B^{p} \setminus B_{1/2}^{p}\), agrees with \(z_j(8x)\) for \(x\in B^{p}_{1/8}\), and satisfies 
\(\norm{Dv_j}_{L^{p}(B^{p})}^{p} \leq C \norm{Dz_{j}}_{L^{p}(B^{p})}^{p}\).
By Lemma~\ref{lemmaBasePoint}, we may further replace \(v_j\) by a  map \(w_{j}\) which equals some fixed value \(y_0 \in N^{n}\) on \(B^{p} \setminus B_{1/2}^{p}\), satisfies 
\[{}
w_j(x)
= v_j(x/\sigma_j){}
= z_{j}(8 x/\sigma_{j})
\quad \text{on \(B^{p}_{\sigma_j/8}\)}
\]
for some \(\sigma_j\in (0,\frac{1}{2})\), and is such that
\begin{equation}
    \label{eqEnergyRescaledMaps}
    \resetconstant
  \int_{B^{p}} \abs{Dw_{j}}^{p}
  \le \int_{B^{p}} \abs{Dv_j}^{p} + \epsilon_{j}
  \le C \int_{B^{p}} \abs{Dz_{j}}^{p} + \epsilon_{j}.
\end{equation}
Here \(\epsilon_{j} > 0\) is chosen so that the sequence \((k_{j}\epsilon_{j})_{j \in \Nset}\) is summable.

For \(j \in \Nset\) and \(i \in \{1, \dotsc, k_j\}\) we now choose points \(a_{i, j} \in \Rset^{p}\) and radii \(r_{i, j} > 0\) such that the balls \(B^p_{r_{i, j}} (a_{i, j})\) are contained in \(B^p_{1/2}\) and are mutually disjoint with respect to \(i\) and \(j\).
We then define the map \(u : B^p \to N^{n}\) by 
\[
 u (x)
 = \begin{cases}
    w_j \bigl(\tfrac{x - a_{i, j}}{r_{i, j}}\bigr) & \text{if \(x \in B^p_{r_{i, j}} (a_{i, j})\)},\\
    y_0 &  \text{otherwise}.
   \end{cases}
\]
Observe that \(u \in W^{1, p}(B^p; N^{n})\) and, by estimates \eqref{eqEnergyRescaledMaps} and \eqref{eqMultiplicitySingularities},
\[{}
\resetconstant{}
\begin{split}
 \int_{B^p} \abs{D u}^p
 & = \sum_{j \in \Nset} \sum_{i = 1}^{k_j} \int_{B^p_{r_{i, j}} (a_{i, j})} \bigabs{Dw_{j}\bigl(\tfrac{x - a_{i, j}}{r_{i, j}}\bigr)}^p \dif x\\
 & = \sum_{j \in \Nset} k_j \int_{B^p} \abs{D w_j}^p \\
 &\le \sum_{j \in \Nset} k_j\biggl( C \int_{B^{p}} \abs{Dz_{j}}^{p} + \epsilon_{j} \biggr) 
 \le 2 C + \sum_{j \in \Nset}{k_{j}\epsilon_{j}}.
\end{split}
\]

We now prove that \(u\) cannot be weakly approximated by a sequence of bounded Sobolev maps.
For this purpose, let \((u_\ell)_{\ell \in \Nset}\) be a sequence in \((W^{1, p} \cap L^{\infty})(B^p; N^{n})\) that converges in measure to \(u\).
For every \(j \in \Nset\) and \(i \in \{1, \dotsc, k_j\}\), it also converges in measure on the ball \(B^p_{r_{i, j}\sigma_j/8} (a_{i, j})\), where we have 
\[
u(x)=z_j \Big(\frac{8}{r_{i,j}\sigma_j}(x-a_{i,j})\Big).
\]
By the scaling invariance of the \(p\)-Dirichlet energy in \(\Rset^{p}\) and Eq.~\eqref{eqLowerBoundRenormalizedEnergy}, we deduce from Lemma~\ref{lemmaSingularityRelaxedEnergy} that
\[
\liminf_{\ell \to \infty}{\int_{B^p_{r_{i, j}} (a_{i, j})} \abs{D u_\ell}^p} 
 \ge c\, 2^{j} \int_{B^p} \abs{Dz_{j}}^p.
\]
Hence, by Fatou's lemma for sums of series and the first inequality in Eq.~\eqref{eqMultiplicitySingularities} we get
\[
\begin{split}
 \liminf_{\ell \to \infty} \int_{B^p} \abs{D u_\ell}^p 
 &\ge \sum_{j \in \Nset} \sum_{i = 1}^{k_j}{\liminf_{\ell \to \infty} \int_{B^p_{r_{i, j}} (a_{i, j})} \abs{D u_\ell}^p }\\
 &\ge c \sum_{j \in \Nset}{k_j 2^{j} \int_{B^p} \abs{D z_j}^p}
 \ge c \sum_{j \in \Nset}{\frac{1}{2}} = + \infty,
\end{split}
\]
which yields a contradiction and completes the proof when \(p = m\).

When \(p < m\), we construct a counterexample \(u \in W^{1, p}(B^{m}; N^{n})\) to the weak sequential density that satisfies the additional property to be constant on \(B^{m} \setminus B^{m}_{1/2}\).
We proceed as follows: we first take a counterexample \(\Tilde{u} \in W^{1, p} (B^p; N^n)\) which equals \(y_{0}\in N^{n}\) on \(B^{p} \setminus B^{p}_{1/2}\). 
We then set, for \(x = (x', x'') \in \Rset^{p - 1} \times \Rset\),
\[
\bar u (x) = 
 \begin{cases}
   \Tilde{u} (2 x', 2 x'' - 1)
   & \text{if \(\abs{x'}^2 + (x'' -1/2)^2 \le 1/16\),}\\
   y_0 & \text{otherwise.}
 \end{cases}
\]
This map \(\bar u\), which is obtained from \(\Tilde{u}\) by translation and dilation, is still a legitimate counterexample in \(\Rset^{p}\).{}

We next perform a rotation of the upper-half subspace \(\Rset^{p-1} \times (0, \infty) \times \{0_{m-p}\} \subset \Rset^{m} \) around the axis \(\Rset^{p-1} \times \{0_{m-p+1}\}\) to define the desired map \(u : \Rset^{m} \to N^{n}\) for \(x = (x', x'') \in \Rset^{p-1} \times \Rset^{m - p + 1}\) by
\[{}
u(x) = \bar u(x', \abs{x''}).
\]
Geometrically, on every slice of the set 
\[{}
\bigl\{x \in \Rset^{m} \st \abs{x'}^2 + (\abs{x''} -1/2)^2 \le 1/16 \bigr\}
\] 
by a \(p\)-dimensional plane containing \(\Rset^{p-1} \times \{0_{m-p+1}\}\), \(u\) is obtained by gluing two copies of \(\Tilde u\) on balls of radii \(1/2\), having opposite orientations.

We conclude by observing that \(u \in W^{1, p}(B^{m}; N^{n})\) is also a counterexample to the weak sequential density.
Indeed, if there were some weak sequential approximation of \(u\) by bounded Sobolev maps, then, by slicing \(\Rset^{m}\) with \(p\)-dimensional planes containing \(\Rset^{p-1} \times \{0_{m-p+1}\}\) and using a Fubini-type argument, we would have a weak sequential approximation of \(\Tilde{u}\) by bounded Sobolev maps in dimension \(p\).{}
By the choice of \(\Tilde{u}\), this is not possible.{}
\end{proof}

\begin{remark}
  \label{remarkSingularityFinite}
  The map provided by the counterexample above has infinitely many singularities.
  In the presence of only \emph{finitely many analytical} singularities, one shows that the problem of weak approximation by a sequence of bounded Sobolev maps has an affirmative answer.
  We justify below this observation in the case of one singularity at \(a = 0\) and for a map \(u \in C^{\infty}(\overline{B^{p}} \setminus \{0\}; N^{n})\) whose restriction \(u|_{\partial B^{p}}\) is homotopic to a constant in \(C^{0}(\partial B^{p}; N^{n})\);
  the latter property is always satisfied when the homotopy group \(\pi_{p-1}(N^{n})\) is trivial. 

  We proceed as follows: given a sequence of numbers \((r_{\ell})_{\ell \in \Nset}\) in \((0, 1)\) that converges to \(0\), for each \(\ell \in \Nset\) take the map \(u_{\ell} : \overline{B^{p}} \to N^{n}\) defined by
  \[{}
  u_{\ell}(x) =
  \begin{cases}
    u(x)              & \text{if \(\abs{x} \ge r_{\ell}\),}\\
    u(r_{\ell}^{2} x/\abs{x}^{2})   & \text{if \(r_{\ell}^{2} \le \abs{x} < r_{\ell}\),}\\
    H(\abs{x}/r_{\ell}^{2}, x/\abs{x}) & \text{if \(\abs{x} < r_{\ell}^{2}\),}
  \end{cases}
  \]
  where \(H : [0, 1] \times \partial B^{p} \to N^{n}\) is a smooth homotopy such that \(H(1, \cdot) = u|_{\partial B^{p}}\) and \(H(t, \cdot) = y_{0}\) for \(t \le 1/2\), and \(y_{0} \in N^{n}\) is  some given point.{}
  Observe that such a map \(u_{\ell}\) is defined through a Kelvin transform which sends the annulus \(B^{p}_{r_{\ell}} \setminus B^{p}_{r_{\ell}^{2}}\) onto  \(\overline{B^{p}_{1}} \setminus \overline{B^{p}_{r_{\ell}}}\) and preserves the \(p\)-Dirichlet energy.
  One then verifies that
  \[{}
  \int_{B^{p}}{\abs{Du_{\ell}}^{p}}
  \le 2 \int_{B^{p}}{\abs{Du}^{p}} + C \norm{DH}_{L^{\infty}([0, 1] \times \partial B^{p})}^{p}.
  \]  
  Since \(u_{\ell} = u\) on \(B^{p} \setminus B^{p}_{r_{\ell}}\), the sequence \((u_{\ell})_{\ell \in \Nset}\) converges in measure to the map \(u\).
\end{remark}

\section*{Acknowledgements}
The second author (ACP) warmly thanks the Institut de Math\'ematiques de Toulouse for the hospitality.
The third author (JVS) was supported by the Fonds de la Recherche scientifique--FNRS (Mandat d'Impulsion scientifique (MIS) F.452317).

\end{document}